\documentclass[twoside,11pt,a4paper]{article}
\usepackage{pifont}
\usepackage{bbm}
\usepackage{bbding}
\usepackage{latexsym}
\usepackage{amsmath}   
\usepackage{tipa}
\usepackage{mathrsfs}  
\usepackage{amssymb}   
\usepackage{color}     
\usepackage{caption2}  
\usepackage{amsfonts}
\usepackage{indentfirst}
\usepackage{amsbsy}
\usepackage{amsthm}
\usepackage{bm}
\usepackage{hyperref}
\usepackage{geometry}
\geometry{left=2.0cm, right=2.0cm, top=2.0cm, bottom=2.5cm}

\def\beeq{\begin{equation}}      \def\eneq{\end{equation}}
\def\beeqy{\begin{eqnarray}}     \def\eneqy{\end{eqnarray}}
\def\bece{\begin{center}}        \def\ence{\end{center}}
\def\ba {\begin{array}}          \def\ea {\end{array}}
\def\bess{\begin{eqnarray*}}     \def\eess{\end{eqnarray*}}
\def\bes{\begin{split}}          \def\ens{\end{split}}
\def\bali{\begin{align}}         \def\enali{\end{align}}
\def\bali{\begin{remark}}         \def\enali{\end{remark}}

\def\s1{\sqrt{-1}}

\theoremstyle{plain}

\newtheorem{theorem}{\noindent{\bf Theorem}}[section]

\newtheorem{lem}[]{\noindent{\bf Lemma}}[section]

\newtheorem{rem}[]{\noindent{\bf Remark}}[section]

        \def\bin#1#2 {{#1\choose#2}}

           \def\dfrac#1#2 {{\displaystyle{#1\over#2}}}
           \def\din#1#2 {{\displaystyle{#1\choose#2}}}

\def\a.{{\rm \ddot{a}}}


\begin{document}
\renewcommand{\thesection}{\arabic{section}}
\renewcommand{\theequation}{\thesection.\arabic{equation}}

\baselineskip=18pt

\title {{\bf Constructing constant curvature  metrics on Riemann surfaces with  singularities }
\author {\small\textit{ Zhiqiang Wei $^*$}\\
\scriptsize\textit{School of Mathematics and Statistics, Henan University, Kaifeng 475004, P.R. China}\\
\scriptsize\textit{Email: weizhiqiang15@mails.ucas.edu.cn}\\
}
\footnotetext{\scriptsize $^*$This work is supported by the National Science Foundation of Henan (Grant No.202300410047)}
}
\date{}
\maketitle

\noindent{\bf Abstract}\hskip3mm  By constructing an ODE through a kind of meromorphic 1-forms, we will give an explicit construction of a kind of conformal metrics of constant curvature on Riemann surfaces with  singularities. As an application, we will classify constant curvature one metrics on $S^{2}$ with two conical singularities, which was first proved by Troyanov in \cite{Tr89} by using projective connection.
\vspace*{2mm}

\noindent{\bf Key words}\hskip3mm  constant curvature metrics, conical singularities.

\vspace*{2mm}
\noindent{\bf 2020 MR Subject Classification:}\hskip3mm 30F45.

\thispagestyle{empty}

\section{Introduction}
\setcounter{equation}{0}
As is well known, the classical uniformation theorem says that there exists a constant scalar curvature (CSC) metric  in each fixed K$\ddot{a}$hler class of Riemann surface without a boundary. A natural question is how to generalize the classical uniformation theorem  to surfaces with boundaries. \par
A (conformal) metric $g$ on a Riemann surface $M$ has a conical singularity of singular angle $2\pi\alpha(0<\alpha\neq1)$ at a point $p\in M$ if in a small neighborhood of $p$, there exists a local complex coordinate chart $(U,z)$ with $z(p)=0$, such that $g=e^{2\varphi(z,\overline{z})}|dz|^{2}$ and
$$\varphi(z,\overline{z})-(\alpha-1)\ln|z|$$
is continuous at $0$ (see \cite{Tr89}). If $g$  is a Riemann metric on $M$ with conical singularities $p_{1},\ldots,p_{N}$ and  conical angles $2\pi\alpha_{1},\ldots,2\pi\alpha_{N}$ respectively, we say that $g$ represents the divisor $D:=\sum_{n=1}^{N}(\alpha_{n}-1)P_{n}$. And we denote the K-surface by $M_{\{\alpha_{1},\ldots,\alpha_{N}\}}$ \par
A classical problem in the theory of surfaces is,
given a divisor $D=\sum_{n=1}^{N}(\alpha_{n}-1)P_{n}~(1\neq \alpha_{n}>0,\forall n)$ on a Riemann surface $M$,
   whether there exists a conformal CSC-$K$ (Constant Scalar Curvature $K$) metric  representing $D$. If $M$ is compact, the Gauss-Bonnet formula says that the integral of the curvature on $M$ equals $2\pi$ times
$$\chi(M)+deg~D,$$
where $\chi(M)$ denotes the Euler number of $M$ and $deg~D=\sum_{n=1}^{N}(\alpha_{n}-1)$ the degree of the divisor $D$. If $K\leq0$, then the unique metric exists if and only if $\chi(M)+deg~D\leq 0$(see \cite{Mc88},\cite{Tr91}). If $\chi(M)+deg~D> 0$,  the problem is still open now, except that there are some partial results. Troyanov \cite{Tr89} proved that there is a CSC-1 metric on $S^{2}_{\{\alpha,\beta\}}$ if and only if $\alpha=\beta$. Troyanov \cite{Tr91} found a sufficient condition under which there exists a CSC-1 metric on a compact Riemann surface $M$ with finite conical singularities.  Provided that $M=S^{2}$ and all angles lie in $(0,2\pi)$, Luo and Tian \cite{LT92} proved the sufficient condition of Troyanov is also necessary and the metric is unique. Under some restrictive conditions, Chen and Li \cite{CL91} found some necessary conditions of the existence of CSC-1 metrics  on compact Riemann surfaces with finite conical singularities. The authors \cite{Xu15} studied the developing maps of a CSC-1 metric $g$ on a compact Riemann surface $M$ with finite conical singularities. In general, a developing map of $g$  is a multivalued meromorphic function on $M\setminus\{singularities\}$. In particularly, they  constructed CSC-1 metrics with finite conical singularities by using a kind of meromorphic 1-form on a compact Riemann surface. For more results about CSC metrics with  singularities, we refer the readers to \cite{DFA11,Br88,Er04,Er20,LX20,LSX21,MD16,UY00} and references cited in.\par
By the way, another way to generalize the classical uniformization theorem on surfaces with edges is by extremal K$\ddot{a}$hler metrics with singularities. We often call a non-CSC extremal metric with finite singularities on a compact Riemann surface a non-CSC HCMU(the Hessian of the Curvature of the Metric is Umbilical) metric.
 We refer the readers  to \cite{Ch00,WZ00,LZ02,Wu11,Wu15,Wei21} and references cited in for more results about non-CSC HCMU metrics. In this paper, we mainly focus on CSC metrics on Riemann surfaces with singularities.\par
 Our purpose in this paper is to construct conformal CSC-$K$ metric on Riemann surfaces with singularities. Our motivation comes form the papers \cite{Wu11} and \cite{Xu15}. By constructing an ODE through a kind of meromorphic 1-forms, we will prove the following theorem which can be regarded as a generalization of the theorem 1.5 in \cite{Xu15} if $K=1$.\par

\begin{theorem}\label{Thm-1}
  Let $\omega$ be an abelian differential of the third kind having poles on a  Riemann surface  $M$ without a boundary (not necessarily compact), whose residues are all nonzero real numbers and whose real part is exact outside the set of poles of $\omega$. Denote $(\omega)=\sum_{i=1}^{I}(\alpha_{i}-1)Q_{i}-\sum_{n=1}^{N}P_{n}$. Then there exists a unique continuous function $\Phi$ on $M$ satisfied
 \begin{equation}\label{E-1}
\begin{cases}
\frac{4d\Phi}{\Phi(4-\Phi)}=\omega+\overline{\omega},\\
\Phi(p_{0})=\Phi_{0}\in (0,4),p_{0}\in M\setminus\{p_{1},\ldots,p_{N}\},
\end{cases}
\end{equation}
on $M\setminus\{p_{1},\ldots,p_{N}\}$.\par
Furthermore, suppose $K\in\{-1,0,1\}$, then
 $$g=\frac{4\Phi(4-\Phi)}{[4+(K-1)\Phi]^{2}}\omega\overline{\omega}$$
 is a conformal CSC-K metric on $M$ with  singularities (not necessary conical singularity). More precisely, \\
 (1) if $K=1$, at the zeros of $\omega$ the singular angles of $g$ are of the form $2\pi(ord_{p}(\omega)+1)$, at the poles of $\omega$ the singular angles of $g$ are of the form $2\pi Res_{p}(\omega)$ or $-2\pi Res_{p}(\omega)$ depending on the sign of $Res_{p}(\omega)$, and $Res_{p}(\omega)=1$ or $Res_{p}(\omega)=-1$ means that $p$ is a smooth point of $g$, i.e., $g$ represents divisor
 $$D=\sum_{i=1}^{I}(\alpha_{i}-1)Q_{i}+\sum_{n=1}^{N} (|Res_{p_{n}}\omega|-1)P_{n}$$\\
 (2) if $K=0$, at the zeros of $\omega$ the singular angles of $g$ are of the form $2\pi(ord_{p}(\omega)+1)$, and at the poles of $\omega$ the singular angles of $g$ are of the form $2\pi Res_{p}(\omega)$ if $Res_{p}(\omega)>0$. If $Res_{p}(\omega)<0$, $p$ is singularity but not a conical singularity.  $Res_{p}(\omega)=1$ means that $p$ is a smooth point of $g$;\\
 (3) if $K=-1$, at the zeros of $\omega$ the singular angles of $g$ are of the form $2\pi(ord_{p}(\omega)+1)$ if $\Phi(p)\neq2$, and at the poles of $\omega$ the singular angles of $g$ are of the form $2\pi Res_{p}(\omega)$ or $-2\pi Res_{p}(\omega)$ depending on the sign of $Res_{p}(\omega)$.  $Res_{p}(\omega)=1$ means that $p$ is a smooth point of $g$.
\end{theorem}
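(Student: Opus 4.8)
The plan is to reduce everything to a local analysis near the marked points, after establishing existence and uniqueness of $\Phi$ by integrating the ODE. For the first part, the hypothesis that $\re\,\omega$ is exact outside the poles means that, locally on any simply connected piece of $M\setminus\{p_1,\ldots,p_N\}$, one can write $\omega+\overline\omega = d(\log h)$ for a suitable positive function, or more directly one substitutes $\Phi = 4\sin^2\theta$ (or the Riccati-type change of variable $u = \Phi/(4-\Phi)$) to linearize the equation $\tfrac{4\,d\Phi}{\Phi(4-\Phi)} = \omega+\overline\omega$; indeed $\tfrac{4\,d\Phi}{\Phi(4-\Phi)} = d\log\tfrac{\Phi}{4-\Phi}$, so the ODE integrates to $\log\tfrac{\Phi}{4-\Phi} = \int_{p_0}^{p}(\omega+\overline\omega) + c$, and the exactness of the real part makes the right-hand side a well-defined single-valued real function on $M\setminus\{p_1,\ldots,p_N\}$ once the initial value $\Phi(p_0)=\Phi_0$ fixes $c$. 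This yields $\Phi$ explicitly and shows $0<\Phi<4$ throughout; continuity at the $p_n$ must then be checked using the residue information (since $\re\,\omega$ has a logarithmic term $\re(\mathrm{Res}_{p_n}(\omega))\log|z|$ near $p_n$, and the residue is a nonzero real number, the primitive behaves like a constant times $\log|z|$, so $\tfrac{\Phi}{4-\Phi}\to 0$ or $\infty$, i.e. $\Phi\to 0$ or $\Phi\to 4$, giving a continuous extension).

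For the curvature statement, I would verify directly that $g = \dfrac{4\Phi(4-\Phi)}{[4+(K-1)\Phi]^2}\,\omega\overline\omega$ has Gauss curvature $K$ away from the singular set. Writing $\omega = f\,dz$ locally, $g = e^{2u}|dz|^2$ with $e^{2u} = \dfrac{4\Phi(4-\Phi)|f|^2}{[4+(K-1)\Phi]^2}$, and one computes $-\Delta u = K e^{2u}$ using the ODE for $\Phi$ to replace derivatives of $\Phi$: since $\pa\Phi = \tfrac14\Phi(4-\Phi) f$ and $\bar\pa\Phi = \tfrac14\Phi(4-\Phi)\bar f$ from \eqref{E-1}, all $z$- and $\bar z$-derivatives of $u$ become rational expressions in $\Phi$ and $f$, and the Liouville equation reduces to an algebraic identity in $\Phi$ (valid because $f$ is holomorphic, so $\pa\bar\pa\log|f|^2=0$ at non-zeros). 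This is the computational heart but it is routine once the substitution $\pa\Phi,\bar\pa\Phi$ is made; the three cases $K=1,0,-1$ differ only in the denominator $[4+(K-1)\Phi]^2$, which is $16$, $(4-\Phi)^2$, $(4-2\Phi)^2$ respectively, explaining why $K=-1$ forces the extra caveat $\Phi(p)\neq 2$.

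The local models at the marked points are then read off by the same substitution. Near a zero $Q_i$ of $\omega$ of order $m = \mathrm{ord}_{Q_i}(\omega)$, $\Phi$ is smooth and nonzero (it satisfies $d\log\tfrac{\Phi}{4-\Phi} = \omega+\overline\omega$ which is smooth there), so $e^{2u}\sim C|z|^{2m}$ and $g$ has a conical singularity of angle $2\pi(m+1)$; one must separately note that when $K=-1$ and $\Phi(Q_i)=2$ the factor $[4-2\Phi]^2$ vanishes to higher order and alters the angle, hence the stated exception. Near a pole $P_n$ with real residue $r = \mathrm{Res}_{P_n}(\omega)$, write $\omega = (r/z + \text{holo})\,dz$; then $\int(\omega+\overline\omega) \sim 2r\log|z|$, so $\Phi/(4-\Phi)\sim |z|^{2r}$, i.e. $\Phi\sim 4|z|^{2r}$ if $r>0$ (so $\Phi\to 0$) and $\Phi \sim 4 - C|z|^{-2r}$ if $r<0$ (so $\Phi\to 4$). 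Plugging into $e^{2u}$ and keeping the leading power of $|z|$ gives, for $K=1$, $e^{2u}\sim C|z|^{2(|r|-1)}$ in both sign cases, hence angle $2\pi|r|$ and smoothness exactly when $|r|=1$; for $K=0$ and $r>0$ one gets $e^{2u}\sim C|z|^{2(r-1)}$, angle $2\pi r$, while $r<0$ makes $\Phi\to 4$ so the numerator $\Phi(4-\Phi)\to 0$ and the denominator $(4-\Phi)^2\to 0$ at competing rates producing a non-conical singularity; for $K=-1$ the denominator $(4-2\Phi)^2$ stays bounded away from $0$ at a pole (since $\Phi\to 0$ or $4$), so again $e^{2u}\sim C|z|^{2(|r|-1)}$, angle $2\pi|r|$. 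I expect the main obstacle to be bookkeeping: carefully tracking leading-order powers of $|z|$ through the possibly-degenerate denominators in all sign/curvature combinations, and confirming the uniform single-valuedness of the primitive $\int(\omega+\overline\omega)$ globally (which is exactly what the ``real part is exact'' hypothesis buys, but the $2\pi i$-periods of the imaginary part of $\omega$ around the poles must be shown to cancel in $\omega+\overline\omega$).
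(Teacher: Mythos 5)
Your proposal is correct and follows essentially the same route as the paper: integrate $\frac{4\,d\Phi}{\Phi(4-\Phi)}=d\log\frac{\Phi}{4-\Phi}$ against the exact form $\omega+\overline{\omega}=2\,\mathrm{Re}\,\omega$ (so single-valuedness is immediate, no period cancellation needs checking), obtain $\Phi$ explicitly, extend it continuously at the poles via the local expansion $\lambda_n\ln|z|^{2}+\text{(continuous)}$, verify the curvature by a direct local computation using $\Phi_z=\tfrac14\Phi(4-\Phi)f$, and read off the cone angles from the leading powers of $|z|$ at zeros and poles. The paper writes this out only for $K=1$ and leaves $K=0,-1$ to the reader, so your sketch of those cases (including the $\Phi(p)\neq 2$ caveat and the non-conical behavior when $K=0$, $\mathrm{Res}_p(\omega)<0$) is a welcome, consistent supplement rather than a departure.
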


\begin{rem}
When we substitute $\omega$ by $-\omega$ in (\ref{E-1}), we get the same metric.
\end{rem}

\begin{rem}
We should point out that if $K\in\{-1,0\}$ and $M$ compact,  by the Gauss-Bonnet formula, the singularities of $g$ are not all conical singularities.
\end{rem}

In \cite{UY00}, the authors called $g$ reducible if the monodromy group of $g$ has at least one fixed point on $\overline{\mathbb{C}}$. By the results in \cite{Xu15}, a CSC-1 metric $g$ in the theorem \ref{Thm-1} is reducible and $\omega$  a character 1-form of $g$. In \cite{Xu15}, the authors proved that if $g$ is a  CSC-1 metric on $S^{2}$ with only two conical singularities then $g$ is a reducible metric  with the same conical angles.  By this fact,  as an application of theorem \ref{Thm-1}, we classify CSC-1 metrics on $S^{2}_{\{\alpha,\alpha\}}$.

\begin{theorem}\label{Thm-2}
Regard $S^{2}$ as $\mathbb{C}\cup\{\infty\}$. If $g$ is a conformal CSC-1 metric on $\mathbb{C}\cup\{\infty\}$ with conical singularities at $z=0$ and $z=\infty$ and conical angles $2\pi\alpha,2\pi\alpha(0<\alpha\neq1)$ respectively, then, up to a change of coordinate ($z\rightarrow pz,p\in \mathbb{C}\setminus\{0\}$ a constant), we have\\
(1) if $\alpha\overline{\in}\mathbb{Z}^{+}$, $g=\frac{4\alpha^{2}|z|^{2(\alpha-1)}}{(1+|z|^{2})^{2}}|dz|^{2};$ \\
(2) if $\alpha\in \mathbb{Z}^{+}$, $g=\frac{4\alpha^{2}|z|^{2(\alpha-1)}}{(1+|z^{\alpha}+b|^{2})^{2}}|dz|^{2},$ where $b\in\mathbb{R}$ is a constant.
\end{theorem}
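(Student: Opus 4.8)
The strategy is to combine the reducibility theorem of \cite{Xu15} with Theorem \ref{Thm-1}, reducing the classification to pinning down one meromorphic $1$-form on $\overline{\mathbb{C}}$, and then to integrate the ODE (\ref{E-1}) explicitly.

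Since $g$ has exactly the two conical singularities $z=0$ and $z=\infty$, by \cite{Xu15} it is reducible and has equal conical angles (consistent with the hypothesis $2\pi\alpha$), and it carries a character $1$-form $\omega$; Theorem \ref{Thm-1} with $K=1$ then writes $g=\tfrac14\Phi(4-\Phi)\,\omega\overline{\omega}$ for the continuous solution $\Phi$ of $4\,d\Phi/(\Phi(4-\Phi))=\omega+\overline{\omega}$ with $\Phi(p_{0})=\Phi_{0}\in(0,4)$. As $\overline{\mathbb{C}}$ carries no nonzero holomorphic $1$-form, $\omega$ is an abelian differential of the third kind on $\overline{\mathbb{C}}$, hence is determined by its simple poles and their (nonzero, real) residues. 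By part (1) of Theorem \ref{Thm-1} the singular set of $g$ consists precisely of the zeros of $\omega$ together with the poles at which $|\mathrm{Res}|\neq1$; therefore all zeros of $\omega$, and all poles with $|\mathrm{Res}|\neq1$, must lie in $\{0,\infty\}$, and at each of $0,\infty$ the form $\omega$ has either a zero of order $\alpha-1$ (possible only if $\alpha\in\mathbb{Z}^{+}$) or a simple pole of residue $\pm\alpha$. Imposing $\deg(\omega)=-2$ together with $\sum_{p}\mathrm{Res}_{p}\omega=0$ then forces the following. If $\alpha\notin\mathbb{Z}^{+}$: $\omega$ has only the two poles $0,\infty$, so $\omega=\pm\alpha\,dz/z$, which by the Remark after Theorem \ref{Thm-1} we may take to be $\alpha\,dz/z$. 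If $\alpha\in\mathbb{Z}^{+}$: besides $\omega=\pm\alpha\,dz/z$ one may also have $\omega=d\log f$ with $f$ a degree-$\alpha$ rational function all of whose critical points lie at $0,\infty$; such an $f$ is, up to a M\"obius post-composition and the irrelevant scaling of $f$, of the form $f=M(z^{\alpha})$ for a M\"obius transformation $M$.

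Next I would integrate (\ref{E-1}). Exactness of $\mathrm{Re}\,\omega$ lets us set $f=\exp\int\omega$ ($|f|$ is always single-valued, and $f$ itself is single-valued when $\alpha\in\mathbb{Z}$), so that $\omega+\overline{\omega}=d\log|f|^{2}$ and hence $\Phi/(4-\Phi)=C|f|^{2}$ for a constant $C>0$ fixed by $\Phi_{0}$; substituting back gives
\[
g=\frac{4C\,|f'|^{2}}{\bigl(1+C|f|^{2}\bigr)^{2}}\,|dz|^{2},
\]
the pull-back by $\sqrt{C}\,f$ of the round metric $\sigma_{0}=4|dw|^{2}/(1+|w|^{2})^{2}$. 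When $\alpha\notin\mathbb{Z}^{+}$ one has $f=z^{\alpha}$, and the change $z\mapsto pz$ with $|p|^{2\alpha}=1/C$ absorbs $C$ and yields the metric in case (1). When $\alpha\in\mathbb{Z}^{+}$ one has $\sqrt{C}\,f=\gamma(z^{\alpha})$ for a M\"obius $\gamma$, so $g=(z\mapsto z^{\alpha})^{*}(\gamma^{*}\sigma_{0})$ with $\gamma^{*}\sigma_{0}$ a smooth round metric on the target sphere; since $g$ is intrinsic it depends on $\gamma$ only modulo post-composition by an isometry of $(\overline{\mathbb{C}},\sigma_{0})$, while $z\mapsto pz$ replaces $\gamma$ by $\gamma\circ(w\mapsto p^{\alpha}w)$. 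Using both freedoms one brings $\gamma$ to the translation $w\mapsto w+b$ (the case $\omega=\pm\alpha\,dz/z$ corresponding to $b=0$), and the leftover rotation $z\mapsto e^{i\theta}z$ makes $b$ real, giving the metric in case (2) with $b\in\mathbb{R}$. Conversely, that each metric in the list really is CSC-$1$ with cone angle $2\pi\alpha$ at $0$ and at $\infty$ is a direct computation of its Gaussian curvature and of the asymptotics of its conformal factor at $0,\infty$.

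I expect the main obstacle to be this last normalisation when $\alpha\in\mathbb{Z}^{+}$: a priori the developing map is an \emph{arbitrary} M\"obius function of $z^{\alpha}$, a family with strictly more parameters than the one-parameter family $\{g_{b}\}$, so the substance of the argument is to prove the collapse. Equivalently, one must show that a round metric $\gamma^{*}\sigma_{0}$ on the target sphere, taken modulo isometries and modulo the dilations $w\mapsto cw$, is classified by the single real invariant $d_{\sigma_{0}}\bigl(\gamma(0),\gamma(\infty)\bigr)\in(0,\pi]$, and then match this invariant with $b$. Everything else---the divisor bookkeeping that pins down $\omega$, and the explicit integration of (\ref{E-1})---is routine.
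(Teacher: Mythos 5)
Your proposal is correct and, at the top level, follows the paper's scheme (reducibility and the character $1$-form from \cite{Xu15}, the representation $g=\tfrac14\Phi(4-\Phi)\,\omega\overline{\omega}$ of Theorem \ref{Thm-1}, then pinning down $\omega$ and integrating (\ref{E-1})), but the way you pin down $\omega$ and normalize is genuinely different from the paper's. The paper proves a normal-form lemma (Lemma \ref{L-1}) by brute force: writing $\omega$ as a logarithmic derivative of quotients of monic polynomials, comparing coefficients in $t's-ts'=\alpha\mu z^{\alpha-1}$ to force $t=z^{\alpha}+\omega_{0}$, $s=z^{\alpha}+\sigma_{0}$, and then computing $g$ explicitly for each of the three resulting forms of $\omega$, normalizing by $z\mapsto pz$ (in the third case by completing the square in $|z^{\alpha}+a|^{2}+\lambda^{2}|z^{\alpha}+1|^{2}$ and choosing both the modulus and the argument of $p^{\alpha}$). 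You instead set $f=\exp\int\omega$, note that for $\alpha\in\mathbb{Z}^{+}$ it is a degree-$\alpha$ rational map whose critical points are confined to $\{0,\infty\}$, conclude by Riemann--Hurwitz that it is totally ramified there, hence $f=M(z^{\alpha})$ with $M$ M\"obius, and then normalize $M$ modulo left multiplication by $\mathrm{PSU}(2)$ (isometries of the target round metric) and right multiplication by $w\mapsto cw$ (the effect of $z\mapsto pz$). The step you flag as the ``main obstacle'' --- collapsing an arbitrary $M$ to a real translation $w\mapsto w+b$ --- is in fact a two-line consequence of the Iwasawa decomposition of $\mathrm{PSL}(2,\mathbb{C})$: writing $M=k\circ m_{r}\circ n_{\beta}=k\circ n_{r\beta}\circ m_{r}$ and conjugating $n_{r\beta}$ by a rotation gives $M=T\circ n_{b}\circ m_{c}$ with $T\in\mathrm{PSU}(2)$, $b=|r\beta|\ge 0$, $c\in\mathbb{C}^{*}$; this is precisely what the paper's completing-the-square computation achieves concretely, so your route does close, and it explains conceptually why exactly one real parameter survives. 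What your argument buys is brevity and insight (no coefficient comparison; $b$ is read off from the relative position of the images of the two cone points); what the paper's buys is fully explicit formulas at every stage. Both arguments also rely equally on \cite{Xu15} for the fact that the given reducible metric is recovered from its character $1$-form by the construction of Theorem \ref{Thm-1}, so there is no gap on that score relative to the paper. One small point worth recording: in case (1) both your derivation and the paper's own proof produce the denominator $(1+|z|^{2\alpha})^{2}$; the exponent in the printed statement of Theorem \ref{Thm-2}(1), namely $(1+|z|^{2})^{2}$, is a typo (that metric has Gauss curvature $\alpha^{-2}|z|^{2-2\alpha}$, not $1$), so you should not silently identify what you obtained with the stated formula but rather note the correction.
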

\begin{rem}
Setting $w=\frac{1}{z}$, one can obtain the theorem II in \cite{Tr89}.

\end{rem}
\section{Proof of theorem \ref{Thm-1}}
In this section, we will give the proof of (1) in theorem \ref{Thm-1}. We left the proof of (2)(3) to readers.\par
Since $\omega+\overline{\omega}$ is exact on $ M\setminus\{p_{1},\ldots,p_{N}\}$, we suppose that
$$\omega+\overline{\omega}=df.$$
Since
$$\frac{4d\Phi}{\Phi(4-\Phi)}=d\ln\frac{\Phi}{4-\Phi},$$
then
$$ \ln\frac{\Phi}{4-\Phi}=f+A_{0},$$
i.e.,
$$\Phi=\frac{4e^{f+A_{0}}}{1+e^{f+A_{0}}},$$
where $A_{0}=\ln\frac{\Phi(p_{0})}{4-\Phi(p_{0})}-f(p_{0})=\ln\frac{\Phi_{0}}{4-\Phi_{0}}-f(p_{0})$.\par

(1) $\Phi$ can be continuously extended to $p_{n},1\leq n\leq N$. \par
We only need to prove $\Phi$ has limits at each point $p_{n},1\leq n\leq N$. Suppose $(D,z)$ is a local complex coordinate disk around some $p_{n},1\leq n\leq N$ with $z(p_{n})=0$ such that there are no other poles and zeros of $\omega$ in $D$. Suppose
\begin{equation}\label{w-1}
\omega=\frac{\lambda_{n}}{z}dz+df_{1},
\end{equation}
where $f_{1}$ is a holomorphic function on $D$. Then $(\omega+\overline{\omega})|_{D\setminus\{0\}}=d(\lambda_{n} \ln |z|^{2})+d(f_{1}+\overline{f_{1}})=df$, or
$f=\lambda_{n}\ln|z|^{2}+(f_{1}+\overline{f_{1}})+a^{*}$ on $D\setminus\{0\}$, where $a^{*}$ is a real constant. This implies
\begin{equation}\label{Exp-1}
\Phi=\frac{4|z|^{2\lambda_{n}}e^{(f_{1}+\overline{f_{1}})+a^{*}+A_{0}}}{1+|z|^{2\lambda_{n}}e^{(f_{1}+\overline{f_{1}})+a^{*}+A_{0}}}~ on ~D\setminus\{0\}.
\end{equation}
So if $\lambda_{n}>0$, $\lim_{z\rightarrow 0}\Phi=0$; if $\lambda_{n}<0$, $\lim_{z\rightarrow 0}\Phi=4$.\par

(2) $g=\frac{\Phi(4-\Phi)}{4}\omega\overline{\omega}$ is a Riemannian metric of constant curvature 1 on  $M\setminus\{q_{1},\ldots,q_{I},p_{1},\ldots,p_{N}\}$.\par

Since $0< \Phi< 4$ and $\omega\neq0$ on $M\setminus\{q_{1},\ldots,q_{I},p_{1},\ldots,p_{N}\}$, $g$ is a Rieamnnian metric.\par

Take a local complex coordinate domain $(X,z)$ on $M\setminus\{q_{1},\ldots,q_{I},p_{1},\ldots,p_{N}\}$ and suppose $\omega=\eta dz$ on $X$, where $\eta$ is a nonvanishing holomorphic function on $X$. Then $g|_{X}=\frac{\Phi(4-\Phi)|\eta(z)|^{2}}{4}|dz|^{2}$. A direct calculation shows the Gauss curvature of $g$ is $K\equiv 1$.\par

(3) $q_{i}, i=1,\ldots,I$ are conical singularities of $g$ with conical angles $2\pi(ord_{q_{i}}(\omega)+1),i=1,\ldots,I$ respectively, and $p_{n},n=1,\ldots,N$ are conical singularities of $g$ with conical angles $\pm2\pi Res_{p_{n}}(\omega),n=1,\ldots, N$ respectively.\par

Choose a local complex coordinate chart $(W,z)$ around $q_{i}$ with $z(q_{i})=0$ and suppose $\omega|_{W}=z^{\alpha_{i}-1}g(z)dz$, where $g$ is a holomorphic function on $W$ with $g(0)\neq0$. Then
 $$g|_{W\setminus\{0\}}=\frac{\Phi(4-\Phi)}{4}|z|^{2(\alpha_{i}-1)}|g(z)|^{2}|dz|^{2}.$$
So $q_{i}, i=1,\ldots,I$ are conical singularities of $g$ with conical angles $2\pi(ord_{q_{i}}(\omega)+1),i=1,\ldots,I$ respectively.\par

By (\ref{w-1}), (\ref{Exp-1}), one can easily prove $p_{n},n=1,\ldots,N$ are conical singularities of $g$ with conical angles $\pm2\pi Res_{p_{n}}(\omega),n=1,\ldots, N$ respectively. Moreover, $Res_{p}(\omega)=1$ or $Res_{p}(\omega)=-1$ means that $p$ is a smooth point of $g$.

\section{Classification of conformal CSC-1 metrics on $S^{2}_{\{\alpha,\alpha\}}$}

In this section, as an application of the theorem \ref{Thm-1}, we will classify CSC-1 metrics on $S^{2}_{\{\alpha,\alpha\}}$. First, we  proof the standard expressions of a kind of meromorphic 1-forms on $S^{2}$.  Regard $S^{2}$ as $\mathbb{C}\cup\{\infty\}$.
\begin{lem}\label{L-1}
Suppose $\omega$ be an abelian differential of the third kind having poles on $\mathbb{C}\cup\{\infty\}$, whose residues are all nonzero real numbers. \\
(1) If $(\omega)=-0-\infty$, then $\omega=\frac{\lambda}{z}dz$, where $\lambda=Res_{0}\omega$;\\
(2) If $(\omega)=(\alpha-1)\cdot0-\infty-\sum_{i=1}^{\alpha}P_{i},\alpha\geq2$, and $Res_{p_{i}}\omega=1,i=1,\ldots,\alpha$, then $\omega=\frac{\alpha z^{\alpha-1}}{z^{\alpha}+1}dz$, up to a change of coordinate ($z\rightarrow pz,p\in \mathbb{C}\setminus\{0\}$ a constant). \\
(3) If $(\omega)=(\alpha-1)\cdot0+(\alpha-1)\cdot\infty-\sum_{i=1}^{\alpha}P_{i}-\sum_{j=1}^{\alpha}Q_{j},\alpha\geq2$,  and $Res_{p_{i}}\omega=1,Res_{q_{i}}\omega=-1, i=1,\ldots,\alpha$, then  $\omega=\frac{\alpha (a- 1) z^{\alpha-1}}{(z^{\alpha}+a)(z^{\alpha}+1)}dz$, up to a change of coordinate ($z\rightarrow pz,p\in \mathbb{C}\setminus\{0\}$ a constant), where $a$ is a complex constant and $a\neq0,1$.
\end{lem}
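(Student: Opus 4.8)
The plan is to fix the affine coordinate $z$ on $\mathbb{C}\cup\{\infty\}$, write $\omega=f(z)\,dz$ with $f$ a rational function, and translate the hypothesis on the divisor $(\omega)$ into the order of $f$ at every point. Since $ord_{0}(dz)=0$ one has $ord_{0}f=ord_{0}\omega$, and since $ord_{\infty}(dz)=-2$ (compute in $w=1/z$) one has $ord_{\infty}f=ord_{\infty}\omega+2$. Because all poles of $\omega$ are simple, $f$ has at worst simple finite poles, so its partial-fraction expansion is controlled completely by the prescribed residues together with the polynomial part permitted by the order of $f$ at $\infty$. In each case this forces $f$ into a rigid form with at most a two-parameter ambiguity, and the allowed rescaling $z\mapsto pz$ ($p\in\mathbb{C}\setminus\{0\}$) is exactly what is needed to remove that ambiguity.

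For (1), $ord_{0}f=-1$ and $ord_{\infty}f=1$ and $f$ has no other zeros or poles, so $zf(z)$ is entire and tends to the finite number $Res_{0}\omega$ as $z\to\infty$; by Liouville $zf(z)\equiv\lambda:=Res_{0}\omega\neq0$, i.e.\ $\omega=\frac{\lambda}{z}\,dz$, with no rescaling required. For (2), $ord_{0}f=\alpha-1$, $ord_{\infty}f=1$, and the finite poles are exactly the simple poles $p_{1},\dots,p_{\alpha}$, each with residue $1$; since $f$ vanishes at $\infty$ there is no polynomial part, so $f=\sum_{i}\frac{1}{z-p_{i}}=\frac{P'}{P}$ with $P:=\prod_{i}(z-p_{i})$ monic of degree $\alpha$. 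Now $P'/P$ vanishes to order $\alpha-1$ at $0$ while $P(0)\neq0$, so the degree-$(\alpha-1)$ polynomial $P'$ (leading coefficient $\alpha$) must be $\alpha z^{\alpha-1}$; integrating, $P(z)=z^{\alpha}+c_{0}$ with $c_{0}\neq0$, hence $\omega=\frac{\alpha z^{\alpha-1}}{z^{\alpha}+c_{0}}\,dz$, and $z\mapsto pz$ with $p^{\alpha}=c_{0}$ produces $\omega=\frac{\alpha z^{\alpha-1}}{z^{\alpha}+1}\,dz$.

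For (3), one has $ord_{0}f=\alpha-1$, $ord_{\infty}f=\alpha+1$, and the finite poles are the $2\alpha$ distinct simple poles $p_{i}$ (residue $1$) and $q_{j}$ (residue $-1$). As before there is no polynomial part, so $f=\sum_{i}\frac{1}{z-p_{i}}-\sum_{j}\frac{1}{z-q_{j}}=\frac{A'B-AB'}{AB}$ with $A:=\prod_{i}(z-p_{i})$ and $B:=\prod_{j}(z-q_{j})$ monic of degree $\alpha$ and coprime. Put $N:=A'B-AB'$: since $A(0)B(0)\neq0$ we get $ord_{0}N=\alpha-1$, while $ord_{\infty}f=\alpha+1$ forces $\deg N=2\alpha-(\alpha+1)=\alpha-1$, so $N(z)=c\,z^{\alpha-1}$ with $c\neq0$. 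Writing $C:=A-B$, the identity $N=AC'-A'C$ holds; if $\deg C=d\geq1$, then comparing top-degree terms gives $\deg N=\alpha+d-1>\alpha-1$, a contradiction, so $C$ is a nonzero constant. Then $A'=B'$ and $N=-CA'$, so $A'=\alpha z^{\alpha-1}$ by monicity, whence $A(z)=z^{\alpha}+a_{0}$, $B(z)=z^{\alpha}+b_{0}$ with $a_{0}$, $b_{0}$, $a_{0}-b_{0}$ all nonzero; finally $z\mapsto pz$ with $p^{\alpha}=b_{0}$ normalizes $b_{0}$ to $1$, and the residue condition $Res_{p_{i}}\omega=1$ fixes the remaining constant, giving $\omega=\frac{\alpha(a-1)z^{\alpha-1}}{(z^{\alpha}+a)(z^{\alpha}+1)}\,dz$ with $a\in\mathbb{C}\setminus\{0,1\}$. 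I expect the main obstacle to be this rigidity step in (3) — that two monic degree-$\alpha$ polynomials whose combination $A'B-AB'$ is a single monomial must differ by a constant; the trick is to rewrite $A'B-AB'=AC'-A'C$ with $C=A-B$, after which it is a one-line degree count, and everything else (the order bookkeeping at $0$ and $\infty$, the partial fractions, and the rescalings) is routine.
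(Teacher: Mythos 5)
Your argument is correct, and its overall frame coincides with the paper's: write $\omega=f\,dz$, use the residue data and the vanishing of $f$ at $\infty$ to get the polynomial-free partial-fraction form $f=\frac{t'}{t}$ (case (2)) resp. $f=\frac{t's-ts'}{ts}$ (case (3)) with $t,s$ the monic polynomials built from the poles, and then show $t,s$ must be $z^{\alpha}+\mathrm{const}$ before rescaling. Where you genuinely diverge is the rigidity step in (3): the paper expands $t's-ts'=\alpha\mu z^{\alpha-1}$ in coefficients and compares them degree by degree from the top down to conclude first that $t-s$ is constant and then that the common part is $z^{\alpha}$, whereas you set $C=A-B$, use the identity $A'B-AB'=AC'-A'C$, and kill $\deg C\geq 1$ by a one-line degree count (note the leading terms of $AC'$ and $A'C$ cannot cancel precisely because $\deg C\leq\alpha-1<\alpha$, which is worth saying explicitly). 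Your route is shorter and avoids the bookkeeping of the double sum; the paper's coefficient comparison is more pedestrian but equally elementary. Two small remarks: in (1) the Liouville argument is more than the paper bothers to write (it calls this case obvious), which is fine; and in (3) your normalization $p^{\alpha}=b_{0}$ (the residue $-1$ factor) actually produces $\omega=\frac{\alpha(1-a)w^{\alpha-1}}{(w^{\alpha}+a)(w^{\alpha}+1)}\,dw$, i.e.\ the stated form with $a$ replaced by $1/a$ after one further admissible rescaling; choosing $p^{\alpha}=a_{0}$, as the paper effectively does, gives the lemma's expression verbatim. This is only a relabeling of the free parameter, not a gap.
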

\begin{proof}
(1) It is an obvious fact.\\
(2) Suppose $p_{i}=a_{i}\in\mathbb{C}\setminus\{0\},i=1,\ldots,\alpha,\mu\neq 0$ and
$$\omega=(\sum_{i=1}^{\alpha}\frac{1}{z-a_{i}})dz=\frac{\alpha \mu z^{\alpha-1}}{\prod_{i=1}^{\alpha}(z-a_{i})}dz.$$
Set $t(z)=\prod_{i=1}^{\alpha}(z-a_{i})$, then
$$t'(z)=\alpha \mu z^{\alpha-1}.$$
So $\mu=1,t(z)= z^{\alpha}+a, \omega=\frac{\alpha  z^{\alpha-1}}{z^{\alpha}+a}dz, a\in\mathbb{C}\setminus\{0\}$ a constant.\\
Set $z=a^{\frac{1}{\alpha}}w$, then
$$\omega=\frac{\alpha  w^{\alpha-1}}{w^{\alpha}+1}dw.$$
(3) Suppose $p_{i}=a_{i},q_{i}=b_{i}\in\mathbb{C}\setminus\{0\},i=1,\ldots,\alpha,\mu\neq0$ and
$$\omega=(\sum_{i=1}^{\alpha}\frac{1}{z-a_{i}}-\sum_{i=1}^{\alpha}\frac{1}{z-b_{i}} )dz=\frac{\alpha \mu z^{\alpha-1}}{\prod_{i=1}^{\alpha}(z-a_{i})\prod_{i=1}^{\alpha}(z-b_{i})}dz.$$

Set $t(z)=\prod_{i=1}^{\alpha}(z-a_{i}),s(z)=\prod_{i=1}^{\alpha}(z-b_{i})$, then
\begin{equation}\label{L-E-1}
t'(z)s(z)-t(z)s'(z)=\alpha \mu z^{\alpha-1}.
\end{equation}

Set
$$t(z)=\sum_{i=0}^{\alpha}\omega_{i}z^{i},s(z)=\sum_{i=0}^{\alpha}\sigma_{i}z^{i},$$
where $\omega_{\alpha}=\sigma_{\alpha}=1$ and $\omega_{0},\sigma_{0}\neq 0$.
Then (\ref{L-E-1}) is equivalent to
\begin{equation}\label{L-E-2}
\sum_{i=1}^{\alpha}\sum_{j=0}^{\alpha}i(\omega_{i}\sigma_{j}-\sigma_{i}\omega_{j})z^{i+j-1}=\alpha \mu z^{\alpha-1}.
\end{equation}
Considering the coefficient of  $z^{2\alpha-1}$ in (\ref{L-E-2}),
$$\sigma_{\alpha-1}-\omega_{\alpha-1}=0.$$
Considering the coefficient of  $z^{2\alpha-2}$ in (\ref{L-E-2})
$$\sigma_{\alpha-2}-\omega_{\alpha-2}=0.$$
......\\
Considering the coefficient of  $z^{\alpha+1}$ in (\ref{L-E-2})
$$\sigma_{1}-\omega_{1}=0.$$
So
$$t(z)=F(z)+\omega_{0},s(z)=F(z)+\sigma_{0},$$
where $F(z)=z^{\alpha}+\omega_{\alpha-1}z^{\alpha-1}+\ldots+\omega_{1}z$.\\
By (\ref{L-E-1}),
$$F'(z)(\sigma_{0}-\omega_{0})=\alpha \mu z^{\alpha-1}.$$
So $\omega_{1}=\ldots=\omega_{\alpha-1}=0,\sigma_{0}-\omega_{0}=\mu$, i.e.,
$$t(z)=z^{\alpha}+\omega_{0}, s(z)=z^{\alpha}+\sigma_{0}.$$
Thus
$$\omega=\frac{\alpha (\sigma_{0}-\omega_{0}) z^{\alpha-1}}{(z^{\alpha}+\omega_{0})(z^{\alpha}+\sigma_{0})}dz.$$
Set $z=\omega_{0}^{\frac{1}{\alpha}}w$, then
$$\omega=\frac{\alpha (a-1)w^{\alpha-1}}{(w^{\alpha}+1)(\omega^{\alpha}+a)}dw,$$
where $a=\frac{\sigma_{0}}{\omega_{0}}\neq0,1$ is a complex constant.
\end{proof}
\textbf{Poof of theorem \ref{Thm-2}}.\par
(1) Set $\omega_{}=\frac{\alpha}{z}dz$. By  direct calculation,
$$g=\frac{4\alpha^{2}e^{A_{0}}|z|^{2(\alpha-1)}}{(1+e^{A_{0}}|z|^{2\alpha})^{2}}|dz|^{2},$$
where $A_{0}\in\mathbb{R}$ is a constant.\par
Set $z=e^{-\frac{A_{0}}{2\alpha}}w$, then
$$g=\frac{4\alpha^{2}|w|^{2(\alpha-1)}}{(1+|w|^{2\alpha})^{2}}|dw|^{2}.$$

(2) Set $\omega=\frac{\alpha z^{\alpha-1}}{z^{\alpha}+1}dz$ and $2\leq\alpha\in\mathbb{Z}^{+}$.
By  direct calculation,
$$g=\frac{4\alpha^{2}e^{A_{0}}|z|^{2(\alpha-1)}}{(1+e^{A_{0}}|z^{\alpha}+1|^{2})^{2}}|dz|^{2},$$
where $A_{0}\in\mathbb{R}$ is a constant.\par
Set $z=e^{\frac{-A_{0}}{2\alpha}}w$, then
$$g=\frac{4\alpha^{2}|w|^{2(\alpha-1)}}{(1+|w^{\alpha}+b|^{2})^{2}}|dw|^{2},$$
where $b=e^{\frac{A_{0}}{2}}\in\mathbb{R}\setminus\{0\}$.\\

(3) Set $\omega=\frac{\alpha (a-1) z^{\alpha-1}}{(z^{\alpha}+a)(z^{\alpha}+1)}dz$, where $a$ is a complex constant and $a\neq0,1$.
By  direct calculation,
$$g=\frac{4\alpha^{2}e^{A_{0}}|a-1|^{2}|z|^{2(\alpha-1)}}{(|z^{\alpha}+a|^{2}+e^{A_{0}}|z^{\alpha}+1|^{2})^{2}}|dz|^{2},$$
where $A_{0}\in\mathbb{R}$ is a constant.\par
Denote $e^{A_{0}}$ by $\lambda^{2}$, then
\begin{equation*}
\begin{aligned}
g&=\frac{4\alpha^{2}\lambda^{2}|a-1|^{2}|z|^{2(\alpha-1)}}{(|z^{\alpha}+a|^{2}+\lambda^{2}|z^{\alpha}+1|^{2})^{2}}|dz|^{2}\\
&=\frac{4\alpha^{2}\lambda^{2}|a-1|^{2}|z|^{2(\alpha-1)}}{(1+\lambda^{2})^{2}[|z^{\alpha}+\frac{a+\lambda^{2}}{1+\lambda^{2}}|^{2}+\frac{\lambda^{2}|a-1|^{2}}{(1+\lambda^{2})^{2}}]^{2}}|dz|^{2}.
\end{aligned}
\end{equation*}
Set $z=pw$, where $|p^{\alpha}|=\frac{\lambda|a-1|}{1+\lambda^{2}}$ such that $b=\frac{a+\lambda^{2}}{p^{\alpha}(1+\lambda^{2})}\in\mathbb{R}$, then
$$g=\frac{4\alpha^{2}|w|^{2(\alpha-1)}}{(1+|w^{\alpha}+b|^{2})^{2}}|dw|^{2}.$$

\textbf{Acknowledgments.} This work was accomplished when the author visited IGP (Institute of Geometry and Physics) in 2022. The author would like to express his sincere gratitude to Professor Bin Xu for many helpful and valuable suggestions. Thank also to Professors Xiuxiong Chen, Qing Chen, Bing Wang and Xiaowei Xu for their warm hospital. Also to my advisor Professor Yingyi Wu for his warm encouragements during the past  years.

\smallskip

\end{document}